\let\ds=\displaystyle
\def\be{\begin{eqnarray}}
\def\en{\end{eqnarray}}
\def\nn{\nonumber}
\def\RR{\mathbb R}
\let\ds=\displaystyle
\newtheorem{theorem}{Theorem}[section]
\newtheorem{lemma}[theorem]{Lemma}
\begin{document}

\title{Large time behavior of differential equations with drifted periodic coefficients modeling Carbon storage in soil}

\author{Stephane Cordier $^{(1)}$, Le Xuan Truong$^{(2)}$, \\
            Nguyen Thanh Long $^{(3)}$, Alain Pham Ngoc Dinh
            $^{(1)}$ }
\date{}
\maketitle {\small
\begin{list}{}{
\setlength{\rightmargin}{.4in} \setlength{\leftmargin}{.4in}
\setlength{\parsep}{.2pt}}
\item[$^{(1)}$] MAPMO, UMR CNRS 6628, F\'ed\'eration Denis Poisson, b\^{a}t. Math\'{e}matiques, University of Orl\'{e}ans, BP 6759, 45067 Orl\'{e}eans Cedex 2, France.\\
\hspace{0.2cm}{E-mail: stephane.cordier@univ-orleans.fr, alain.pham@univ-orleans.fr}
\item[$^{(2)}$]Department of Mathematics, University of Economics of HoChiMinh City, 59C Nguyen Dinh Chieu Str., Dist. 3, HoChiMinh City, Vietnam.
 \\
\hspace{0.2cm}{E-mail: lxuantruong@gmail.com }
\item[$^{(3)}$] Department of Mathematics and Computer Science, University of Natural Science, Vietnam National University HoChiMinh City,
227 Nguyen Van Cu Str., Dist.5, HoChiMinh City, Vietnam.\
\hspace{0.2cm}{E-mail:longnt@hcmc.netnam.vn, longnt2@gmail.com }
\end{list}
}
\hrule
{\small {\center{\bf Abstract}}

\ \par This paper is concerned with the linear ODE in the form
$y'(t)=\lambda\rho(t)y(t)+b(t)$, $\lambda <0$  which represents
a simplified storage model
of the carbon in the soil. In the first
part, we show
that, for a periodic function $\rho(t)$, a linear drift in the
coefficient $b(t)$ involves a linear drift for the solution of this
ODE. In the second
part, we extend the previous results to a classical heat
non-homogeneous equation. The connection with an analytic semi-group
associated to the ODE equation is considered in the third part.
Numerical examples are given.

\ \par
 \vspace{0.2cm}
\noindent
{\small{\em Keywords:}
Ordinary differential equations, Parabolic differential equations,
analytic semi group,
T-periodic function, linear drift, Cauchy sequence, series' estimates. }\\
\noindent {\small{\em AMS subject classification:} 34E05, 35K,
40A05.}

\vspace{0.3cm} \hrule \ \par \ \par

\section{Introduction}

A lot of phenomena of evolution are described using ordinary differential
equations ODE or systems in which the coefficients and/or the source
terms are periodic. Let us mention some applications in physics
(e.g. the harmonic oscillator , the resonance phenomena due to
oscillatory source terms), electricity (let us mention the famous
RLC circuit with an oscillatory generator), in biology (circadian
cycle), in agricultural studies (due to seasonal effects).\ \par

The main question which is addressed here is when or under which
conditions a slow perturbation of the coefficient in the ODE will
induce a similar behavior on the solutions, in large time. More
precisely, we are looking for conditions to ensure that a linear
drift in the (periodic) coefficients of the ODE will lead to a
linear deviation (and thus unbounded) of the solutions or, on the
contrary, what kind of perturbation in the coefficients are
compatible with a stable (bounded, periodic in large time)
solutions.

Although these questions can be applied to large number of applications,
our original motivation concerned with the effect of climate change on
the seasonal variations in the organic carbon contained in the soil
as claimed at the end of the conclusion of Martin et al.  [13].\par

Since the readers may be not familiar with this domain. Let
 us recall some basis about the issues of
  the soil organic carbon (SOC) modelling.\par

The spreading is one of opportunities for organic materials of human
origin (sludge of filtering treatment station and derivative
products) and agricultural (manure). The organic matter spread
contain significant amounts of organic carbon which, after
application, a fraction is permanently stored in the soil. ; The
remainder is returned to the atmosphere as CO2. The spreading can be
accessed through the storage of organic carbon in the soil, helping
to reduce CO2 emissions (a major greenhouse gas effect) compared for
example to the incineration of organic matter that returns all
carbon into the atmosphere. The optimization of spreading of organic
materials is important in reducing emissions of greenhouse gas
effect.\ \par

 The dynamics of carbon in the soil, which determines
the amount of organic carbon stored in soil and returned in the form
of CO2, depends on soil type, agricultural practices, climate and
quantities spread. The soil organic carbon (SOC) plays an important
role in several environmental and land management issues. One of the
most important issues is the role that SOC plays as part of the
terrestrial carbon and might play as a regulation of the atmospheric
CO2. Many factors are likely, in a near future, to modify the SOC
content, including changes in agricultural practices [3,18,2] and global
climate changes [9,5,8,10,11]. \par

Understanding SOC as a function of soil characteristics, agricultural management
and climatic conditions is therefore crucial and many models have
been developed in this perspective. These models are used in a
variety of ways and after for long term studies
[6,15,16,17]. The behavior of the SOC system, over a long term and assuming that
the environment of the system (inputs of organic carbon, climatic
conditions) is stable, is reported to tend towards steady or periodic state. \par

Some of he SOC models have been formulated mathematically [14,4,1,12,13].
 Here we consider the RothC model  [7,13] which consists in
splitting the soil carbon into four active compartments. Under a
continuous form it can be written as
\begin{equation}
\frac{dC(t)}{dt}=\rho(t)AC(t)+B(t),  \tag{1.1}
\end{equation}
 where C (t) is a vector with 4
components, each corresponding to a compartment storage of carbon in
the soil: DPM (decomposable plant material), RPM ( resistant plant
material), BIO (microbial community) and HUM ( humus). These
components indicate the amount of carbon stored at the moment t.\
\par In (1.1) \ \par
i) $\rho(t)$ is a positive function indicating the speed of mineralization of
soil, which results in CO2 emissions. \ \par ii) $A$ is a matrix
that can represent the percentage of clay in the soil and speeds of
mineralization in each compartment of C in the soil.\ \par
 iii) $B(t)$ indicates the amount of carbon brought in the soil  (amount
spread per unit time).\ \par

Let us recall that the initial goal of this study  was to
understand how long term evolution on
climatic data imply variation in large time on the solution of
ODE with periodic coefficient and/or source
terms with a (linear) drift.
The mathematical tools involved in this paper are rather classical
and simple but there are, up to our knowledge, very few literature
on the subject. \par

This article is organized as follows: \ \par
In section 2 we consider the asymptotic behavior oh the solution of the ODE associated to (1.1)
and we derive the main property i.e. there exits a unique solution $y_{\infty}(t)$ of
$y^{\prime }\left( t\right) =\lambda \rho \left( t\right) y\left( t\right)
            +b\left( t\right) ,\text{ }0\leq t<+\infty ,$ such that  $y_{\infty}(t+T)=y_{\infty}(t)+\gamma(t)$
where $\gamma(t)$ is a periodic function of period $T$. In section 3 we consider a classical heat non-homogeneous
 equation whose the rhs $ f(x,t)$ satisfies the assumption derived from the 2nd section. So
the properties concerning this equation can be deduced  from the
results of the ODE ( theorem 3.2 and theorem 3.3). In the 3rd part (section 4)
we connect the PDE and ODE equations with an analytic semi-group to
extend the previous results given by the PDE and ODE equations (
theorem 4.2). These results of sections 2 and 3 are illustrated by numerical tests in
section 5.

\section{Asymptotic behavior of the solution of the ODE}

 In this paper, we consider the linear differential equation
    \begin{equation}
            y^{\prime }\left( t\right) =\lambda \rho \left( t\right) y\left( t\right)
            +b\left( t\right) ,\text{ }0\leq t<+\infty , \tag{2.1}
    \end{equation}
where $\lambda < 0$, $\rho(t)$ and $b(t)$ are given functions satisfying the following conditions

\vspace{0.1cm}
\noindent $(A_{1})$ $\rho \left( t\right) $ is an T-periodic function, with $T>0$ fixed.

\vspace{2mm}
\noindent $(A_{2})$ there exist the T-periodic function, $\beta(t)$, such that
    \begin{equation}
         b(t+T)=b(t)+\beta(t), \quad \forall t \in \left[0, \infty\right). \tag{2.2}
    \end{equation}

\noindent The general solution of (2.1) has the form
    \begin{equation}
        y(t)=e^{a(t)}\left\{C_1+\int_0^t{b(s)e^{-a(s)}ds}\right\}, \tag{2.3}
    \end{equation}
where $C_1$ is a constant and
    \begin{equation}
        a(t)\stackrel{def}{=}\lambda\int_0^t{\rho(s)ds}. \tag{2.4}
    \end{equation}
In this section, we prove that there exists a unique solution
$y_{\infty}(t)$ of (2.1) satisfying
    \begin{equation}
        y_{\infty}(t+T)=y_{\infty}(t)+\gamma(t), \forall t\in [0, \infty), \tag{2.5}
    \end{equation}
where $\gamma(t)$ is an T-periodic function. Let first remark that
\begin{lemma}
 Let $ b:R _{+}\longrightarrow R $.
 The following properties are equivalent :\\
(a) $ \exists  \beta  (t)\ $periodic with period $ T$ such that\
 $ b(t+T)=b(t)+\beta  (t),\ \forall  t\geq  0$\\
(b) $  \exists  \tilde b(t)$ periodic with period $ T$ such
that $ b(t)=\tilde b(t)+{{\ds t}\over{\ds T}}  \beta (t),\forall t\geq 0$
\end{lemma}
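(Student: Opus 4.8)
The plan is to establish the two implications (a) $\Rightarrow$ (b) and (b) $\Rightarrow$ (a) by direct computation, exploiting the $T$-periodicity of $\beta$ (and, in the second direction, of $\tilde b$ as well). The only real decision is how to guess the correct auxiliary function $\tilde b$ in the forward direction; once that is made, both directions reduce to one-line verifications, with the same periodic function $\beta$ appearing in (a) and (b).

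For (a) $\Rightarrow$ (b), I would simply define
\begin{equation*}
\tilde b(t) \stackrel{def}{=} b(t) - \frac{t}{T}\,\beta(t), \qquad t \geq 0,
\end{equation*}
which is forced by the form of the identity in (b). It then remains to check that this $\tilde b$ is $T$-periodic. Evaluating at $t+T$ and using $\beta(t+T) = \beta(t)$ together with the hypothesis $b(t+T) = b(t) + \beta(t)$, the terms involving $\beta$ telescope: the extra $\beta(t)$ coming from $b(t+T)$ is exactly cancelled by the extra $(T/T)\beta(t)$ produced by the coefficient $(t+T)/T$. Hence $\tilde b(t+T) = \tilde b(t)$, and the representation in (b) holds.

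For (b) $\Rightarrow$ (a), I would start from $b(t) = \tilde b(t) + (t/T)\beta(t)$ and compute the difference $b(t+T) - b(t)$ directly. Using the periodicity of both $\tilde b$ and $\beta$, the $\tilde b$ contributions cancel and the coefficient difference $(t+T)/T - t/T = 1$ leaves precisely $\beta(t)$. This recovers the relation in (a) with the same periodic $\beta$, closing the equivalence.

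Since both directions are short algebraic identities, I do not anticipate any genuine obstacle; the only point requiring a little care is to keep the two occurrences of $\beta$ identified as the same function throughout, so that the equivalence is stated for a fixed periodic $\beta$ rather than merely asserting the existence of possibly different periodic functions in the two statements.
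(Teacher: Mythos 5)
Your proposal is correct and follows exactly the paper's own argument: define $\tilde b(t)=b(t)-\frac{t}{T}\beta(t)$ for (a)$\Rightarrow$(b) and verify its $T$-periodicity by the cancellation you describe, then reverse the computation for (b)$\Rightarrow$(a). Your remark about keeping the same $\beta$ in both statements is a fair observation, since the lemma as written leaves $\beta$ implicitly shared between (a) and (b).
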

\begin{proof}
Let us first prove  (a)$\Longrightarrow  $(b).
Choose $ \tilde b(t)=b(t)-{{\ds t}\over{\ds T}}  \beta (t)$. Then $
\tilde b(t)$ is periodic with period $ T$ since \\
  $ \tilde b(t+T)=b(t+T)-{{ t+T}\over{\ds T}}
\beta  (t+T)=b(t)+\beta  (t)-({{\ds t}\over{\ds T}} +1)\beta  (t)= b(t)-{{\ds t}\over{\ds T}}  \beta  (t)=\tilde
b(t).$\\

Conversely  (b)$\Longrightarrow  $(a). Using
 $ b(t)=\tilde b(t)+{{\ds t}\over{\ds T}}  \beta  (t)$, we have
 $ b(t+T)=\tilde b(t+T)+{{\ds t+T}\over{\ds T}}  \beta
(t+T)=\tilde b(t)+({{\ds t}\over{\ds T}}  +1)\beta  (t)
        =\tilde b(t)+{{\ds t}\over{\ds T}}  \beta
(t)+\beta  (t)=b(t)+\beta  (t).$ This concludes the proof.
\end{proof}

Let us now prove the main result of this section.
First, we state the some lemmas

\begin{lemma}
    Let $(A_{1})$ hold. Then
    \begin{equation}
    a(t+nT)=a(t)+a(nT)=a(t)+na(T), \forall t \geq 0, n\in \mathbb{N}. \tag{2.6}
    \end{equation}
\end{lemma}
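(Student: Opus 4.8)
The plan is to prove both equalities directly from the definition $a(t)=\lambda\int_0^t\rho(s)\,ds$ in (2.4), exploiting only the additivity of the integral over adjacent intervals together with the $T$-periodicity of $\rho$ from $(A_1)$. No fixed-point or limiting argument is needed; the identities (2.6) are purely a consequence of the translation invariance of the integral of a periodic function.

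First I would establish the relation $a(t+nT)=a(t)+a(nT)$. Splitting the integral at the point $nT$ gives
\begin{equation}
a(t+nT)=\lambda\int_0^{nT}\rho(s)\,ds+\lambda\int_{nT}^{t+nT}\rho(s)\,ds=a(nT)+\lambda\int_{nT}^{t+nT}\rho(s)\,ds. \notag
\end{equation}
In the remaining integral I would perform the change of variable $u=s-nT$, so that $\rho(s)=\rho(u+nT)=\rho(u)$ by $(A_1)$, since $nT$ is an integer multiple of the period. This yields $\lambda\int_0^t\rho(u)\,du=a(t)$, and adding the two pieces gives the first equality in (2.6).

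For the second equality $a(nT)=na(T)$, I would argue by induction on $n$, with base case $n=1$ trivial and $a(0)=0$. For the inductive step, setting $t=T$ in the relation just proved gives $a((n+1)T)=a(T+nT)=a(T)+a(nT)$, and the induction hypothesis $a(nT)=na(T)$ then yields $a((n+1)T)=(n+1)a(T)$. Alternatively one may write $a(nT)=\lambda\sum_{k=0}^{n-1}\int_{kT}^{(k+1)T}\rho(s)\,ds$ and apply the same periodic change of variable $u=s-kT$ to each summand, each of which equals $\lambda\int_0^T\rho(u)\,du=a(T)$.

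I do not expect any genuine obstacle here: the statement is essentially a bookkeeping identity. The only point requiring a little care is ensuring the change of variable is applied so that the shifted argument $u+nT$ (respectively $u+kT$) lands on a full number of periods, which is precisely what allows $(A_1)$ to replace $\rho(u+nT)$ by $\rho(u)$; this is the step where periodicity of $\rho$, rather than mere boundedness, is used.
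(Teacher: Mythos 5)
Your proposal is correct and follows essentially the same route as the paper: split the integral at $nT$, use the substitution $u=s-nT$ together with the $T$-periodicity of $\rho$ to identify the tail with $a(t)$, and obtain $a(nT)=na(T)$ by decomposing $[0,nT]$ into $n$ period-length intervals (the paper uses exactly this sum; your induction variant is an equivalent repackaging). No gaps.
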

\begin{proof}
    From (2.4) we deduce that
    \begin{equation}
    a(t+nT)=\lambda\int_0^{nT}\rho(s)ds+\lambda\int_{nT}^{t+nT}\rho(s)ds
               =a(nT)+\lambda\int_{nT}^{t+nT}\rho(s)ds. \tag{2.7}
    \end{equation}
On the other hand, by the assumption $(A_{1})$, we have
    \begin{equation}
    \lambda\int_{nT}^{t+nT}\rho(s)ds=\lambda\int_0^t{\rho(s+nT)ds}=\lambda\int_0^t
    {\rho(s)ds}=a(t), \tag{2.8}
    \end{equation}
and
    \begin{equation}
    a(nT)=\lambda\sum_{k=0}^{n-1}\int_{kT}^{(k+1)T}\rho(s)ds
            =\lambda\sum_{k=0}^{n-1}\int_{0}^{T}\rho(s)ds=na(T). \tag{2.9}
    \end{equation}
Combining (2.7)-(2.9) we have (2.6).
\end{proof}

\begin{lemma}
    Let assumptions $(A_1)$, $(A_2)$ hold. For $n\in \mathbb{Z}_+$ and $t \in [0, \infty)$, we put
    \begin{equation}
        y_{n}(t)=y(t+nT)=e^{a(t+nT)}\left(y(0)+\int_{0}^{t+nT}{b(s)e^{-a(s)}ds}\right). \tag{2.10}
    \end{equation}
Then,
    \begin{equation}
    y_{n}(t)=y_{\infty}(t)+\delta_{n}(t)+n\gamma(t), \tag{2.11}
    \end{equation}
where
    \begin{alignat}{1}
    y_{\infty}(t)=&e^{a(t)}\left\{\frac{e^{a(T)}}{1-e^{a(T)}}\int_{0}^{T}{b(s)e^{-a(s)}ds}
     -\frac{e^{a(T)}}{\left(1-e^{a(T)}\right)^2}\int_{0}^{T}{\beta(s)e^{-a(s)}ds}
    +\int_{0}^{t}{b(s)e^{-a(s)}ds}\right\}, \tag{2.12}
    \end{alignat}

    \begin{alignat}{1}
    \delta_{n}(t)=&e^{a(t)}e^{na(T)}\left\{y(0)-\frac{e^{a(T)}}{1-e^{a(T)}}\int_{0}^{T}
    {b(s)e^{-a(s)}ds}
     +\frac{e^{a(T)}}{\left(1-e^{a(T)}\right)^2}\int_{0}^{T}{\beta(s)e^{-a(s)}ds}\right\},
    \tag{2.13}
    \end{alignat}
and
    \begin{equation}
    \gamma(t)=e^{a(t)}\left\{\frac{e^{a(T)}}{1-e^{a(T)}}\int_{0}^{T}
    {\beta(s)e^{-a(s)}ds} + \int_{0}^{t}{\beta(s)e^{-a(s)}ds}\right\}. \tag{2.14}
    \end{equation}
\end{lemma}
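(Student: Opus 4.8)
The plan is to work directly from the integral representation (2.10) and reduce every quantity to integrals over a single period. First I would use the shift identity (2.6) of Lemma 2.2 to factor the exponential as $e^{a(t+nT)}=e^{a(t)}e^{na(T)}$, and I would record the companion relation $e^{-a(s+kT)}=e^{-a(s)}e^{-ka(T)}$ for the integrand. The second structural input is the behaviour of $b$ under a shift by $kT$: iterating $(A_2)$ and using the $T$-periodicity of $\beta$ gives, by an immediate induction, $b(s+kT)=b(s)+k\beta(s)$ for every integer $k\ge 0$. These two facts are the only ingredients beyond algebra.

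Next I would split the integral in (2.10) as $\int_0^{t+nT}=\sum_{k=0}^{n-1}\int_{kT}^{(k+1)T}+\int_{nT}^{nT+t}$ and apply the substitution $s=u+kT$ to each block. Writing $q=e^{-a(T)}$ and the one-period quantities $I_b=\int_0^T b(s)e^{-a(s)}\,ds$ and $I_\beta=\int_0^T \beta(s)e^{-a(s)}\,ds$, each full period $[kT,(k+1)T]$ contributes $q^k(I_b+kI_\beta)$, while the terminal partial block contributes $q^n\big(\int_0^t b e^{-a}+n\int_0^t \beta e^{-a}\big)$. Summing over $k$ yields $\int_0^{t+nT} b e^{-a}=I_b\sum_{k=0}^{n-1}q^k+I_\beta\sum_{k=0}^{n-1}k q^k+q^n\big(\int_0^t b e^{-a}+n\int_0^t \beta e^{-a}\big)$.

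The next step is to evaluate the two finite sums in closed form: the geometric sum $\sum_{k=0}^{n-1}q^k=(1-q^n)/(1-q)$ and the arithmetico-geometric sum $\sum_{k=0}^{n-1}k q^k$, obtained by differentiating the geometric sum in $q$ or by quoting its standard closed form. Multiplying the whole expression by $e^{na(T)}=q^{-n}$ and using $e^{a(T)}e^{-a(T)}=1$, the factors $q^n$ in front of the terminal block cancel, so the genuinely $t$-dependent pieces $e^{a(t)}\int_0^t b e^{-a}$ and $n\,e^{a(t)}\int_0^t \beta e^{-a}$ pass through unchanged. I would then sort the remaining terms by their $n$-dependence: the part carrying an explicit factor $n$ assembles into $n\gamma(t)$ as in (2.14); the part proportional to the decaying factor $e^{na(T)}$ assembles into $\delta_n(T)$; and everything surviving as $n\to\infty$ (here $e^{a(T)}<1$ since $\lambda<0$ and $\int_0^T\rho>0$, hence $e^{na(T)}\to 0$) is exactly $y_\infty(t)$ of (2.12). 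Concretely I would define $\delta_n(T):=y_n(t)-y_\infty(t)-n\gamma(t)$ and verify that the leftover matches (2.13).

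The main obstacle is purely computational: evaluating $\sum_{k=0}^{n-1}k q^k$ and then tracking signs while regrouping. After multiplication by $q^{-n}$ one meets sums such as $q^{-n}\sum_{k=0}^{n-1}q^k=\sum_{j=1}^{n}e^{ja(T)}=e^{a(T)}(1-e^{na(T)})/(1-e^{a(T)})$, and it is easy to confuse this $1-e^{na(T)}$ with $1+e^{na(T)}$; the sign of the $I_b$ term in the transient $\delta_n(T)$ is exactly where this care pays off. The cleanest route is to isolate the coefficients of $I_b$ and of $I_\beta$ separately: the $I_b$ coefficient is a pure geometric sum and, after subtracting the $y_\infty$ contribution $e^{a(T)}/(1-e^{a(T)})$, leaves the transient $-e^{a(T)}e^{na(T)}/(1-e^{a(T)})$; the $I_\beta$ coefficient combines the arithmetico-geometric sum with the explicit $n$-term and telescopes, its limiting value $-e^{a(T)}/(1-e^{a(T)})^2$ fixing $y_\infty$ and the residual $e^{a(T)}e^{na(T)}/(1-e^{a(T)})^2$ fixing $\delta_n$. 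Once these two coefficient identities are checked, (2.11)-(2.14) follow by collecting terms, with no analytic estimate needed since $n$ is finite throughout.
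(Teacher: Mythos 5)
Your proposal is correct and follows essentially the same route as the paper: split $\int_0^{t+nT}$ into period blocks, apply the shift identities $a(s+kT)=a(s)+ka(T)$ and $b(s+kT)=b(s)+k\beta(s)$, evaluate the geometric and arithmetico-geometric sums, and sort the result by its dependence on $n$ (the paper merely routes the same computation through $y_n(0)$ first). One remark: the transient coefficient of $\int_0^T b(s)e^{-a(s)}\,ds$ that you obtain, namely $-e^{a(T)}e^{na(T)}/\bigl(1-e^{a(T)}\bigr)$, is the correct one and agrees with the paper's own intermediate display for $y_n(0)$; the plus sign printed on that term in (2.13) is a typo.
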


\begin{proof}
By the assumption $(A_{2})$, it follows from (2.10) and the lemma
2.2 that
    \begin{equation}
    y_{n}(t)=e^{a(t)}\left\{y_{n}(0)+\int_0^t{b(s)e^{-a(s)}ds}+n\int_0^t{\beta(s)e^{-a(s)}ds}
    \right\}. \notag 
    \end{equation}
On the other hand, we have
    \begin{alignat}{1}
    y_{n}\left( 0\right)&=y\left( nT\right) =e^{a\left( nT\right)
    }\left\{
    y\left( 0\right) +\int_{0}^{nT}b\left( s\right) e^{-a\left( s\right)}ds\right\}   \notag \\
    &=e^{na\left( T\right) }\left\{ y\left( 0\right)+\sum_{k=0}^{n-1}\int_{0}^{T}b\left( s+kT\right)
    e^{-a\left( s+kT\right) }ds\right\}  \notag \\
    &=e^{na\left( T\right) }\left\{ y\left( 0\right) +\int_{0}^{T}b\left(s\right) e^{-a\left( s\right)
    }ds\sum_{k=0}^{n-1}e^{-ka\left( T\right)}+\int_{0}^{T}\beta \left( s\right) e^{-a\left( s\right)
    }ds\sum_{k=0}^{n-1}ke^{-ka\left( T\right) }\right\} . \notag
    \end{alignat}
By using the following equalities
    \begin{equation}
    \sum_{k=0}^{n-1}e^{-ka\left( T\right) }=\frac{1-e^{-na\left( T\right) }}{1-e^{-a\left( T\right)
    }}, \notag
    \end{equation}
and
    \begin{equation}
    \sum_{k=0}^{n-1}ke^{-ka\left( T\right) }=\frac{e^{-a\left( T\right) }}{\left( e^{-a\left( T\right)
    }-1\right) ^{2}}-\frac{e^{-\left( n+1\right)a\left( T\right) }}{\left( e^{-a\left( T\right) }-1\right)
    ^{2}}+n\frac{e^{-na\left( T\right) }}{e^{-a\left( T\right) }-1}, \notag
\end{equation}
 thus we obtain
    \begin{alignat}{1}
    y_{n}\left( 0\right)&=\frac{e^{a\left( T\right) }}{1-e^{a\left( T\right) }}\int_{0}^{T}b\left(
    s\right) e^{-a\left( s\right) }ds-\frac{e^{a\left(T\right) }}{\left( 1-e^{a\left( T\right) }\right)
    ^{2}}\int_{0}^{T}\beta\left( s\right) e^{-a\left( s\right) }ds   \notag \\ 
    &+e^{na\left( T\right) }\left\{ y\left( 0\right) -\frac{e^{a\left( T\right)}}{1-e^{a\left( T\right)
    }}\int_{0}^{T}b\left( s\right) e^{-a\left( s\right)}ds+\frac{e^{a\left( T\right) }}{\left(
    1-e^{a\left( T\right) }\right) ^{2}}\int_{0}^{T}\beta \left( s\right) e^{-a\left(
    s\right)}ds\right\}.    \notag \\
    &+n\frac{e^{a\left( T\right) }}{1-e^{a\left( T\right) }}\int_{0}^{T}\beta\left( s\right)
    e^{-a\left( s\right) }ds  \notag
    \end{alignat}
Combining previous equalities, we obtain (2.11). The proof of Lemma is
complete.
\end{proof}

Now, we state the main theorem
\begin{theorem}
    Let $(A_{1})$, $(A_{2})$ hold. Then, there exists a unique solution $y_{\infty}(t)$ of (2.1) such that
    \begin{equation}
        y_{\infty}(t+T)=y_{\infty}(t)+\gamma(t), \forall t\in [0, \infty), \tag{2.15}
    \end{equation}
where $\gamma(t)$ is periodic function of period T, defined by
    \begin{equation}
        \gamma(t)= e^{a(t)}\left\{\frac{e^{a(T)}}{1-e^{a(T)}}\int_0^T{\beta(s)e^{-a(s)}ds}
        +\int_0^t{\beta(s)e^{-a(s)}ds}\right\}.\tag{2.16}
    \end{equation}
\end{theorem}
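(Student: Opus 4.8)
\emph{Strategy.} Every solution of (2.1) has the form (2.3) with $C_1=y(0)$, so producing a solution that obeys (2.15) reduces to selecting a single scalar $y(0)$. The plan is therefore to compute the one-period increment $\Delta(t):=y(t+T)-y(t)$ explicitly, to match it against the prescribed $\gamma$ of (2.16), and to show that this matching is solvable by exactly one value of $y(0)$.

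\emph{Computing the increment.} First I would write, using $a(t+T)=a(t)+a(T)$ from Lemma 2.2,
\[
y(t+T)=e^{a(t)}e^{a(T)}\left\{y(0)+\int_0^T b(s)e^{-a(s)}ds+\int_T^{t+T}b(s)e^{-a(s)}ds\right\}.
\]
In the last integral I would substitute $s=u+T$ and invoke $(A_2)$ together with $a(u+T)=a(u)+a(T)$, so that $b(u+T)e^{-a(u+T)}=(b(u)+\beta(u))e^{-a(u)}e^{-a(T)}$; the factor $e^{-a(T)}$ cancels the outer $e^{a(T)}$, and the resulting $\int_0^t b\,e^{-a}$ piece cancels against the corresponding term of $y(t)$. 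The expected outcome is
\[
\Delta(t)=e^{a(t)}\left\{\left(e^{a(T)}-1\right)y(0)+e^{a(T)}\!\int_0^T b(s)e^{-a(s)}ds+\int_0^t\beta(s)e^{-a(s)}ds\right\}.
\]
The only bookkeeping hazard here is keeping the cancellation of the $\int_0^t b\,e^{-a}$ terms straight; everything else is a change of variables.

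\emph{Existence and uniqueness.} Comparing $\Delta(t)$ with (2.16), the two integrals $\int_0^t\beta\,e^{-a}$ agree automatically, so $\Delta(t)-\gamma(t)=e^{a(t)}\,\kappa$, where $\kappa$ is the constant $\left(e^{a(T)}-1\right)y(0)+e^{a(T)}\int_0^T b\,e^{-a}-\frac{e^{a(T)}}{1-e^{a(T)}}\int_0^T\beta\,e^{-a}$. Since $e^{a(t)}>0$ for all $t$, the identity $\Delta(t)=\gamma(t)$ holds for every $t$ if and only if $\kappa=0$. Because $a(T)=\lambda\int_0^T\rho\neq 0$ (the drift setting requires $\int_0^T\rho\neq0$, and $\lambda<0$), we have $e^{a(T)}\neq1$, so $\kappa=0$ is a nondegenerate linear equation determining $y(0)$ uniquely; solving it returns precisely the value $y_\infty(0)$ read off from (2.12). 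This simultaneously yields existence (take that initial value) and uniqueness (no other $y(0)$ works). As a shortcut one may instead invoke Lemma 2.3: choosing $y(0)$ so that the coefficient of $e^{na(T)}$ in (2.11) vanishes forces $\delta_n(T)\equiv0$, and then (2.11) evaluated at $n=0,1$ gives (2.15) at once.

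\emph{Periodicity of $\gamma$.} It remains to verify that $\gamma$ from (2.16) is $T$-periodic. I would split $\int_0^{t+T}\beta\,e^{-a}=\int_0^T+\int_T^{t+T}$, substitute $s=u+T$ in the second integral (using $\beta(u+T)=\beta(u)$ and $a(u+T)=a(u)+a(T)$), and collect terms; the computation hinges on the elementary identity $\frac{e^{a(T)}}{1-e^{a(T)}}+1=\frac{1}{1-e^{a(T)}}$, which collapses $\gamma(t+T)$ back to $\gamma(t)$. I expect the determination of $y(0)$ (more precisely, the nondegeneracy $e^{a(T)}\neq1$) to be the only genuine point requiring care; the remaining steps are routine substitutions.
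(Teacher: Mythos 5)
Your proposal is correct, and it takes a genuinely different route from the paper. The paper constructs $y_{\infty}$ as a limit: it first proves Lemma 2.3, decomposing $y_{n}(t)=y_{\infty}(t)+\delta_{n}(T)+n\gamma(t)$ via an explicit computation of $y_{n}(0)$ with geometric and arithmetico-geometric sums, then sets $u_{n}=y_{n}-n\gamma$ and lets $n\to\infty$ (which needs $e^{na(T)}\to 0$, i.e.\ $a(T)<0$), and finally proves uniqueness by showing that the difference of two admissible solutions solves the homogeneous equation and can only have a $T$-periodic increment if it vanishes. You instead treat (2.15) as a functional equation on the one-parameter family (2.3): you compute the one-period increment $\Delta(t)=e^{a(t)}\bigl\{(e^{a(T)}-1)y(0)+e^{a(T)}\int_{0}^{T}b\,e^{-a}+\int_{0}^{t}\beta\,e^{-a}\bigr\}$ and observe that $\Delta=\gamma$ reduces to a single nondegenerate linear equation $\kappa=0$ in $y(0)$, whose solution is exactly $L(T)$ of (2.19). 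Your computations check out, and your route is shorter, avoids the series manipulations and the limit entirely, and needs only $a(T)\neq 0$ rather than $a(T)<0$. What the paper's heavier construction buys is the dynamical content that your argument does not provide: $y_{\infty}$ is exhibited as the large-time attractor of \emph{every} solution modulo the drift $n\gamma$, which is precisely what is illustrated numerically in Section 4. One small remark on uniqueness: as written you only rule out other solutions satisfying (2.15) with the \emph{specific} $\gamma$ of (2.16), while the paper's uniqueness allows an arbitrary $T$-periodic $\widetilde{\gamma}$; but your identity $\Delta(t)-\gamma(t)=\kappa e^{a(t)}$ closes that gap in one line, since $\kappa e^{a(t)}$ is $T$-periodic only when $\kappa(e^{a(T)}-1)=0$, i.e.\ $\kappa=0$. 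You are also right to flag $e^{a(T)}\neq 1$ as the one genuine hypothesis; the paper uses it tacitly throughout (indeed it tacitly uses the stronger $a(T)<0$).
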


\begin{proof}
For $n\in \mathbb{Z}_+$ and $t \geq 0$, let us define
    \begin{equation}
    u_{n}(t)\stackrel{def}{=}y_{n}(t)-n\gamma(t). \tag{2.17}
    \end{equation}
It follows from (2.11)-(2.14) and (2.17) that
    \begin{equation}
    \lim_{n\rightarrow +\infty }u_{n}\left( t\right) =y_{\infty }\left( t\right),\text{ }\forall t\in \left[
    0,+\infty \right). \tag{2.18}
    \end{equation}
It is clear that $y_{\infty}(t)$ is a solution of equation (2.1)
satisfies the value at $t=0$
    \begin{equation}
    y_{\infty}\left( 0\right) =\frac{e^{a\left( T\right) }}{1-e^{a\left( T\right) }}\int_{0}^{T}b\left( s\right)
    e^{-a\left( s\right) }ds-\frac{e^{a\left(T\right) }}{\left( 1-e^{a\left( T\right)
    }\right)^{2}}\int_{0}^{T}\beta\left( s\right) e^{-a\left( s\right) }ds\equiv L\left( T\right).
    \tag{2.19}
    \end{equation}
By (2.18), we have
    \begin{alignat}{1}
    y_{\infty }\left( t+T\right) &=\lim_{n\rightarrow +\infty }u_{n}\left(t+T\right)
    =\lim_{n\rightarrow +\infty }\left\{ y_{n}\left( t+T\right)-n\gamma \left( t+T\right) \right\}.
    \tag{2.20}\\
    &=\lim_{n\rightarrow +\infty }\left\{ y_{n+1}\left( t\right) -\left(n+1\right) \gamma \left(
    t\right) +n\left[ \gamma \left( t\right) -\gamma\left( t+T\right) \right] +\gamma \left( t\right)
    \right\}  \notag
    \end{alignat}
On the other hand, by the periodicity of $\beta(t)$, we get
    \begin{alignat}{1}
    \gamma \left( t+T\right) &=e^{a\left( t+T\right) }\left\{ \frac{e^{a\left(T\right) }}
    {1-e^{a\left( T\right) }}\int_{0}^{T}\beta \left( s\right)e^{-a\left( s\right) } ds
    +\int_{0}^{t+T}\beta \left( s\right) e^{-a\left(s\right) }ds\right\}  \tag{2.21} \\
    &=e^{a\left( t\right) }\left\{ \frac{e^{a\left( T\right) }}{1-e^{a\left(T\right) }}\int_{0}^{T}\beta
    \left( s\right) e^{-a\left( s\right)}ds+\int_{T}^{t+T}\beta \left( s\right) e^{a\left( T\right)
    -a\left(s\right) }ds\right\} \notag \\
    &=e^{a\left( t\right) }\left\{ \frac{e^{a\left( T\right) }}{1-e^{a\left(T\right) }}\int_{0}^{T}\beta
    \left( s\right) e^{-a\left( s\right)}ds+\int_{0}^{t}\beta \left( s\right) e^{-a\left( s\right)
    }ds\right\}=\gamma \left( t\right).  \notag
    \end{alignat}
Combining (2.20) and (2.21), we obtain
    \begin{equation}
    y_{\infty }\left( t+T\right) =\lim_{n\rightarrow +\infty }u_{n+1}\left(t\right) +\gamma \left(
    t\right) =y_{\infty }\left( t\right) +\gamma \left(t\right) .\tag{2.22}
    \end{equation}
\underline {Uniqueness} \ \par
 Now, let $\widetilde{y}\left(
t\right)$ be the solution of (2.1) corresponding to the initial
value $\widetilde{y}\left(0\right)=A$ and
    \begin{equation}
    \widetilde{y}\left( t+T\right) =\widetilde{y}\left( t\right) +\widetilde{\gamma }\left( t\right),
    \tag{2.23}
    \end{equation}
where $\widetilde{\gamma }\left( t\right)$ is an T-periodic function. Then
$y^{\ast }\left( t\right)=y_{\infty }\left( t\right) -\widetilde{y}\left( t\right)$ satisfy
    \begin{equation}
    \left\{ \begin{array}{l}
    y^{\prime }\left( t\right) =\lambda \rho \left( t\right) y\left( t\right) ,\text{ \ }0<t<+\infty,\\
    y\left( 0\right) =L\left( T\right) -A,
    \end{array}\right. \tag{2.24}
    \end{equation}
and
    \begin{equation}
    y^{\ast }\left( t+T\right) =y^{\ast }\left( t\right) +\gamma ^{\ast }\left(t\right) ,\text{ \
    }\gamma ^{\ast }\left( t+T\right) =\gamma ^{\ast }\left(t\right) ,\text{ }\forall t\geq 0.\text{ }
    \tag{2.25}
    \end{equation}
It follows from (2.24) that
    \begin{equation}
    y^{\ast }\left( t\right) =\left( L\left( T\right) -A\right) e^{a\left(t\right) },\forall t\geq 0.
    \tag{2.26}
    \end{equation}
From (2.25) and (2.26) we deduce that
    \begin{equation}
    \gamma ^{\ast }\left( t\right) =-\left( L\left( T\right) -A\right) \left(1-e^{a\left( T\right) }\right)
    e^{a\left( t\right) },\forall t\geq 0. \tag{2.27}
    \end{equation}
Combining (2.25), (2.27) we get $A=L(T)$. By the uniqueness of
Cauchy problem, the proof of theorem 2.8 is complete.
\end{proof}
$\underline{\rm Remark}$ \ \ If we consider the equation (2.1) where
the assumptions $(A_{1})$ and $(A_{2})$ are replaced by

\vspace{0.1cm} \noindent $(A_{1}^{\prime })$ $b \left( t\right) $ is
an T-periodic function, with $T>0$ fixed

\vspace{2mm} \noindent $(A_{2}^{\prime })$ there exist a
T-periodic function, $\alpha(t)$, such that
    \begin{equation}
         \rho(t+T)=\rho(t)+\alpha(t), \quad \forall t \in \left[0, \infty\right). \tag{2.28}
\end{equation}
In that case it is clear that there does not exist a solution which
has the same property as the function $\rho(t)$, for instance if we
consider the example with $b(t)=0$. Here the solution of (2.1) tends
to $0$ as $t \rightarrow +\infty$.

We end this section with an example. Consider the following Cauchy
problem (2.1) with the choice
\begin{equation}
\lambda  =-1,\; y_{0}=1,\; \rho  (t)={\rm sin}^{2}t,\; b(t)=t
\tag{2.29}
  \end{equation}
In fig.1 we have put the graphs of the functions $ y_{n}(t)$ and $
y_{n}(t+\pi )$ with $ n=5$ and here we also note the drift property
for the solution of (2.1) taking initial value $ y_{0}=1$ at $ t=0.$

\begin{center}
\includegraphics [width=10cm,height=10cm,
 keepaspectratio=true]{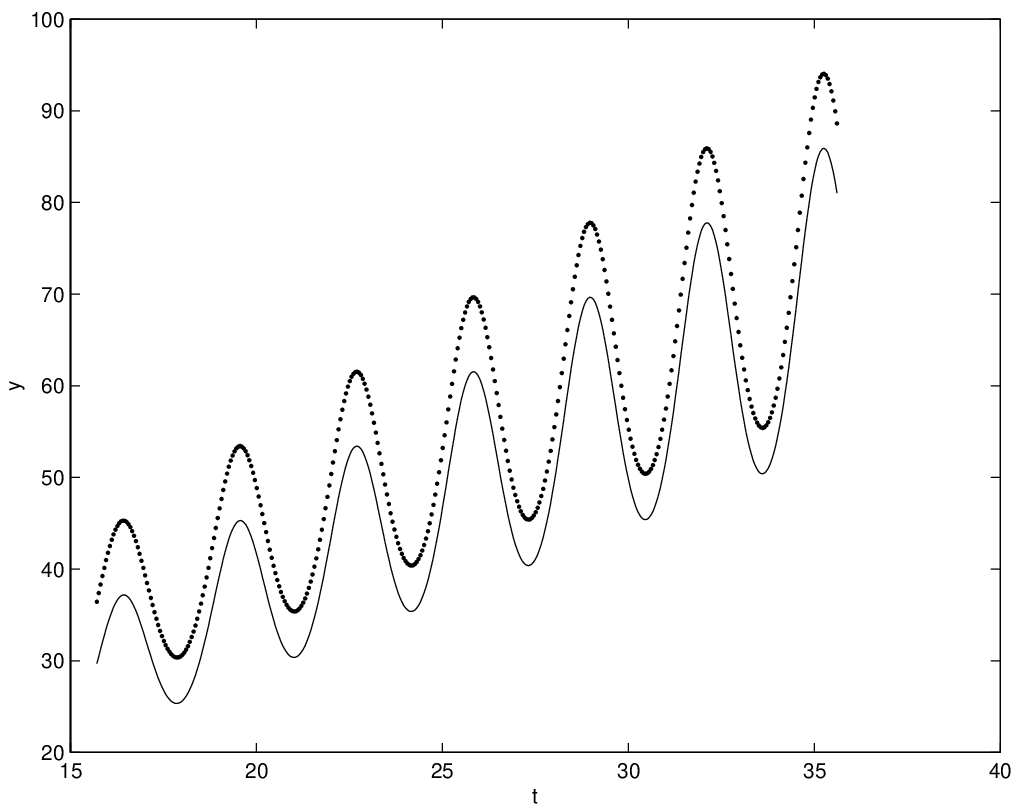}

Fig1: Drift property in asymptotic behavior
\end{center}

\section{Classical heat equation}

 \rm  Let $\Omega =(0,1)$ and $Q_T=\Omega \times (0,T)$, for
$T>0.$ In what follows we will denote
 \be
 \langle u,v\rangle =\int_0^1 u(x)v(x)dx, \|v \|=\sqrt{\langle v,v\rangle}.\nn \en
We also denote $u(x,t), u_t(x,t), u_x(x,t)$ by $u(t), u'(t), u_x(t)$
respectively. In this paper, first we consider the classical heat
equation
\begin{alignat}{1}
u_t-\rho(t)u_{xx}=f(x,t), ~~~(x,t)\in Q_T \tag{3.1}\\
 u(0,t)=u(1,t)=0, ~~~\tag{3.2}\\
 u(x,0)=u_0(x),~~~~\tag{3.3}.\end{alignat}
 In the next three theorems
$\rho(t)$ will be taken equal to $1$. We have the well known result for linear parabolic equation in the following theorem \ \par{\bf Theorem 3.1} \it
Let $T>0$ and assume that $u_0 \in L^{2}(\Omega), f\in L^{2}(Q_T)$.
Then, the problem (3.1)-(3.3) has a unique solution u satisfying \be
u\in L^{2}(0,T; H_{0}^{1}(\Omega))\cap C(0,T;L^{2}(\Omega)),\,
u_t\in L^{2}(0,T; H^{-1}(\Omega)). \nn \en

Assumption A}
\par Always with a rhs $f$ belonging $L^2_{Loc}(Q_\infty), Q_\infty=\Omega \times\RR_+$ we assume in addition
that $f$ satisfies the following hypothesis: \\
(\textit{A}) there exists the $T_1$ -periodic function $\beta
(x,t),$ such that
\begin{alignat}{1}  f(x,t+T_1)=f(x,t)+\beta (x,t), \quad\forall (x,t)\in Q_\infty \notag \\
 \beta(x,t+T_1)=\beta(x,t), \quad\forall (x,t)\in
Q_\infty. \tag{3.4}\end{alignat} It follows from (3.4) that the
functions $f_j(t)$ satisfies a drift property i.e.
\begin{equation} f_j(t+T_1)=f_j(t)+\beta_j(t)\tag{3.5} \end{equation}
where $\beta_j(t)=\int_0^1\beta(x,t)w_j(x) dx $ is a $T_1$
periodic-function. \\{\bf Theorem 3.2} \it Let $T>0$ and assume that
$f$ belonging $L^2_{Loc}(Q_\infty)$ and satisfies assumption $(A)$.
Then, the problem (3.1)-(3.2) has a unique solution $\tilde u$
satisfying \be \tilde u\in L^{2}(0,T; H_{0}^{1}(\Omega))\cap
C(0,T;L^{2}(\Omega)),\, \tilde u_t\in L^{2}(0,T; H^{-1}(\Omega)).
\nn \en\\ and which is $T_1$
periodic wrt the norm $L^2(\Omega).$ \\
\noindent Proof: \rm If we associate the ODE with the
drift-property (3.5) and theorem 3.1, we can then apply the previous results (Lemma
2.3 and theorem 2.4) in which we have proved that there exists a
unique solution called $g_j^\infty (t)$ and which has also the same
drift property $g_j^\infty (t+T_1)=g_j^\infty (t)+\gamma_j(t)$,
$\gamma_j(t)$ being $T_1$ -periodic.\\
Consider now the function \begin{equation} \tilde u(x,t)=\ds
\sum_{j=1}^\infty g_j^\infty(t)w_j(x)\tag{3.6}\end{equation} To
prove that the function $\tilde u$ is a solution of the problem
(3.1)-(3.3), then from theorem 3.1 it is sufficient to prove that
the initial condition $\tilde u(x,0)$ is in $L^2(\Omega).$ We have
\begin{equation}
g_j^\infty(0)=\frac{\textrm{e}^{-\lambda_jT_1}}{1-\textrm{e}^{-\lambda_jT_1}}\int_0^{T_1}
f_j(s)\textrm{e}^{\lambda_js}ds-
\frac{\textrm{e}^{-\lambda_jT_1}}{(1-\textrm{e}^{-\lambda_jT_1})^2}\int_0^{T_1}
\beta_j(s)\textrm{e}^{\lambda_js}ds. \tag{3.7}\end{equation} By
virtue of (3.7) and using the Cauchy-Schwarz inequality we get
after some calculations \be
|g_j^\infty(0)|^2\leq\frac{3}{\lambda_1(1-\textrm{e}^{-\lambda_1T_1})^4}\int_0^{2T_1}
|f_j(s)|^2ds,\nn\en since  the definition of the function $\beta_j$
in (3.5). \ \par  Hence \be\ds \|\tilde u(x,0\|^2=\sum_{j=1}^\infty
|g_j^\infty(0)|^2\leq\frac{3}{\lambda_1(1-\textrm{e}^{-\lambda_1T_1})^4}\sum_{j=1}^\infty\int_0^{2T_1}
|f_j(s)|^2ds=C(T_1)\|f\|^2_{L^2(Q_{2T_1})}\nn\en which proves the
first
part of the theorem. \\
It remains to  prove that the function $\tilde u(x,t)$ has the drift
$T_1$ periodic property for the norm $L^2$. For that purpose
consider $\|\tilde u(t+T_1)-\tilde u(t)\|^2\equiv\Gamma(t).$ So he
have from Lemma 2.3 and theorem 2.4  \be
\ds\Gamma(t)=\sum_{j=1}^\infty
|g_j^\infty(t+T_1)-g_j^\infty(t)|^2=\sum_{j=1}^\infty|\gamma_j(t)|^2
\nn\en where \begin{equation}
\gamma_j(t)=\frac{\textrm{e}^{-\lambda_jt}}{1-\textrm{e}^{-\lambda_jT_1}}\int_0^{T_1}
\beta_j(s)\textrm{e}^{-\lambda_j(T_1-s)}ds+ \int_0^{t}
\beta_j(s)\textrm{e}^{-\lambda_j(t-s)}ds. \tag{3.8}\end{equation}
Using again the inequality of Cauchy-Schwarz we find from (3.8)
that $\Gamma(t)\leq C(T_1)\|f\|^2_{L^2(Q_{2T_1})}$, $C(T_1)$ being a
generic constant depending on $T_1$, which proves that $\Gamma(t)$
is well defined. The property $T_1$-periodic follows
since the functions $\gamma_j(t)$ are themselves $T_1$-periodic. \\
Using Lemma 2.3 yet we give an asymptotic expansion for the solution
of the problem (3.1)-(3.3). So we have \\{\bf Theorem 3.3} \it Let
$u$ be the solution of the initial and boundary value problem
(3.1)-(3.3) then there exists a function $u_1\in
L^2_{Loc}(Q_\infty)$ such as \be \ds\lim _{n\rightarrow
\infty}\|u(t+nT_1)-\tilde u(t)-nu_1(t)\|=0 \nn\en $\tilde u$
denoting the solution given in theorem 3.2.\\ Proof:
 \rm Let $u(x,t)=\sum_{j=1}^\infty g_j(t)w_j(x)$ the unique
solution of the problem (3.1)-(3.3). From Lemma 2.3 the coefficient
$g_j(t)$ have the following expansion \begin{equation}
g_j(t+nT_1)=g_j^\infty(t)+\delta_{n,j}(t)+n
\tilde\gamma_j(t)\tag{3.9}\end{equation} where \\ \be
\tilde\gamma_j(t)=\frac{\textrm{e}^{-\lambda_jT_1}}{1-\textrm{e}^{-\lambda_jT_1}}\int_0^{T_1}
\beta_j(s)\textrm{e}^{-\lambda_j(t-s)}ds+ \int_0^{t}
\beta_j(s)\textrm{e}^{-\lambda_j(t-s)}ds \nn\en \be \delta_{n,j}(t)=
-\textrm{e}^{n\lambda_jT_1}[\textrm{e}^{-jt}u_{0j}&-&\frac{\textrm{e}^{-\lambda_jT_1}}{1-\textrm{e}^{-\lambda_jT_1}}\int_0^{T_1}
f_j(s)\textrm{e}^{-\lambda_j(t-s)}ds
\nn\\&+&\frac{\textrm{e}^{-\lambda_jT_1}}{(1-\textrm{e}^{-\lambda_jT_1})^2}\int_0^{T_1}
\beta_j(s)\textrm{e}^{-\lambda_j(t-s)}ds ]\nn\en It is easily to
prove as before that $\sum_{j=1}^\infty|\tilde\gamma_j(t)|^2\leq
C(T_1\|f\|^2_{L^2(Q_{2T_1})},$ . So the function
$u_1(x,t)=\sum_{j=1}^\infty\tilde\gamma_j(t)w_j(x)$ is well defined.
We have \be \|u(t+nT_1)-\tilde u(t)-nu_1(t)\|^2=\sum_{j=1}^\infty
|\delta_{n,j}(t)|^2\nn\en Using the definition of $\delta_{n,j}(t)$
in (3.9) we get the following bound \be |\delta_{n,j}(t)|^2\leq
3\textrm{e}^{-2\lambda_jt-2n\lambda_jT_1}\left[u^2_{0j}+
\frac{6}{\lambda_1(1-\textrm{e}^{-\lambda_1T_1})^4}\int_0^{T_1}f_j^2(s)ds\right]\nn\en
and the last inequality enables us to obtain \be \sum_{j=1}^\infty
|\delta_{n,j}(t)|^2\leq C\textrm{e}^{-2nT_1}\rightarrow 0
\quad\textrm{as}\quad n \rightarrow \infty, \quad C
\quad\textrm{constant} \nn\en which proves our assertion.\ \par \it
Remark: \rm The three previous theorems are still valid with
$\rho(t) \;T_1$-periodic.

\section{Connection with the semi-group}

1. \bf Preliminaries: \rm Let $X$ be a Banach space with
norm$\|\cdot\|$. We consider the linear evolution equation given
by
\begin{equation}\tag{4.1}
x^{\prime}(t)  = -A(t)x(t) + f(t), \, t \in \mathbb{R}_+,
\end{equation}
where $A(t)$ is a family of closed linear operators in $X$ and
$f(t)$ be an $X$ valued function. Throughout this paper, we make the
following assumptions

\vspace{0.2cm} \noindent {\bf Assumption 1.} For each initial value
$x(0) = \zeta$ in $X$, there exists a unique mild solution $x$ of
equation (4.1) on $\mathbb{R}_+$, defined by
\begin{equation}\tag{4.2}
x(t) = U(t,0)\zeta + \int_{0}^{t}U(t, s)f(s)\,ds,
\end{equation}
where $U(t, s)$, \, $0 \leq s \leq t$, is the evolution system
associated with the family $\left\{A(t)\right\}, \,t \in
\mathbb{R}_+$.

\vspace{0.2cm} \noindent {\bf Assumption 2.} The maps $t \longmapsto
A(t)$ is $\eta$ - periodic and, for each $t \in \mathbb{R}_+$,
$A(t)$ is dissipative operator, that is for every $x\in
D\left(A(t)\right)$, there exists a $x^* \in F(x)$ such that
\begin{equation}\tag{4.3}
Re\left<Ax, x^{*}\right> \leq 0,
\end{equation}
where $$F(x) = \left\{x^{*} \in X^{*}: \left<x, x^*\right> =
\|x\|^{2} = \|x^*\|^2\right\}.$$

\vspace{0.2cm} \noindent {\bf Assumption 3.} There exists a $X$
valued function $\beta(t)$, $t \geq 0$, which is $\eta$ - periodic
such that
\begin{equation}\tag{4.4}
f(t + \eta) = f(t) + \beta(t), \, \forall t \in \mathbb{R}_+.
\end{equation}

\noindent {\bf  \underline{Remark}.}
\begin{enumerate}
\item Assumption 1 follows from the following three conditions (Pazy [15])
\begin{itemize}
\item[(i)] The domain $D(A)$ of $\left \{A(t): t \in \mathbb{R}_+\right\}$ is dense in $X$ and independent of $t$.
\item[(ii)] For each $t \in \mathbb{R}_+$, the resolvent $R\left(\lambda: A(t)\right)$ of $A(t)$ exists for all $\lambda$
with $Re(\lambda) \leq 0$ and there exists a constant $M$ such that
\begin{equation}\tag{4.5}
\left\|R\left(\lambda: A(t)\right)\right\| \leq
\frac{M}{|\lambda|+1}.
\end{equation}
\item[(iii)] There exist a constants $L$ and $0 < \alpha \leq 1$ such that
\begin{equation}\tag{4.6}
\left\|\left(A(t) - A(s)\right)A^{-1}(\tau)\right\| \leq L|t -
s|^{\alpha}, \,\, \forall t, s, \tau \in \mathbb{R}_+.
\end{equation}
\end{itemize}
\item The periodicity of $A(t)$ implies that $U(t + \eta, s + \eta) = U(t, s)$, for all $0 \leq s \leq t$.
\item From the assumptions 2, we deduce that $\|U(t, s)\| \leq 1$ for each $0 \leq s \leq t$.
\end{enumerate}

2. \bf Results:  \rm In this section, we prove that there exists a
unique solution $x_{\infty}(t)$ of equation (4.1)satisfying
\begin{equation}\tag{4.7}
x_{\infty}(t + \eta) = x_{\infty}(t) + \gamma(t), \,\, \text{for
all} \,\, t \geq 0,
\end{equation}
where $\gamma(t)$ is an $\eta$-periodic function. First, we need the
following lemma

\vspace{0.2cm} \noindent {\bf \underline{Lemma 4.1}.} Let $x(t)$ is
a mild solution of equation (4.1) with the initial value $x(0) =
\zeta$. For $n \in \mathbb{Z}_+$ and $t \in [0, +\infty)$, we put
\begin{equation}\tag{4.8}
x_{n}(t) = x(t + n\eta) = U(t + n\eta, 0)\zeta + \int_{0}^{t +
n\eta}U(t+n\eta, s)f(s)\,ds.
\end{equation}
Then we have
\begin{equation}\tag{4.9}
x_{n}(t) = x_{\infty}(t) + \delta_{n}(\eta) + n\gamma(t),
\end{equation}
where \begin{multline}\tag{4.10}
\hspace{1cm} x_{\infty}(t) = U(t, 0)\left\{(I - U_0)^{-1}\int_{0}^{\eta}U(\eta, s)f(s)\,ds \right. \\
\left. - (I - U_0)^{-2}\int_{0}^{\eta}U(\eta,
s)\beta(s)\,ds\right\}+\int_{0}^{t}U(t, s)f(s)\,ds \hspace{1cm}
\end{multline}
\begin{equation}\tag{4.11}
\delta_{n}(t) = U(t, 0)\,U_{0}^{n}\left[\zeta - (I -
U_0)^{-1}\int_{0}^{\eta}U(\eta, s)f(s)\,ds + (I -
U_0)^{-2}\int_{0}^{\eta}U(\eta, s)\beta(s)\,ds\right],
\end{equation}
\begin{equation}\tag{4.12}
\gamma(t) = U(t, 0)(I - U_{0})^{-1}\int_{0}^{\eta}U(\eta,
s)\beta(s)\,ds + \int_{0}^{t}U(t, s)\beta(s)\,ds,
\end{equation}
with $U_{0} = U(\eta, 0)$ such that $\|U_{0}\|<1.$

\begin{proof}
By the assumption 3, the remark (ii), it follows from (4.8) that
\begin{alignat}{2}\tag{4.13}
 x_{n}(t) & = U(t + n\eta, 0)\zeta + \int_{0}^{n\eta}U(t + n\eta, s)f(s)ds + \int_{n\eta}^{t+n\eta}U(t + n\eta, s)f(s)ds \notag\\
  &= U(t + n\eta, n\eta)\left\{U(n\eta, 0)\zeta +\int_{0}^{n\eta}U(n\eta, s)f(s)ds\right\} +
\int_{0}^{t}U(t+ n\eta, s + n\eta)f(s+n\eta)ds \notag\\
 & = U(t, 0)x_{n}(0) + \int_{0}^{t}U(t, s)f(s)ds + n\int_{0}^{t}U(t,
 s)\beta(s)ds \notag
\end{alignat}
since we have $U(t+nT,s)=U(t+nT,nT)U(nT,s).$  On the other hand, we
have
\begin{alignat}{2}\tag{4.14}
x_{n}(0)  &= U(n\eta, 0)\zeta + \int_{0}^{n\eta}U(n\eta, s)f(s)ds \notag \\
 &= U(n\eta, 0)\zeta + \sum_{k=1}^{n}\int_{(k-1)\eta}^{k\eta}U(n\eta, s)f(s)ds \notag\\
&=U(n\eta, 0)\zeta + \sum_{k=1}^{n}\int_{0}^{\eta}U(n\eta, s + (k-1)\eta)f(s)ds \notag\\
& \hspace{2cm} + \sum_{k=1}^{n}(k-1)\int_{0}^{\eta}U(n\eta, s +
(k-1)\eta)\beta(s)ds \notag
\end{alignat}
Using the following relations
\begin{equation}\tag{4.15}
U(n\eta, 0) = U^{n}(\eta, 0) = U_{0}^{n},
\end{equation}
and
\begin{equation}\tag{4.16}
U(n\eta, s+ (k-1)\eta) = U_{0}^{n-k}U(\eta, s),
\end{equation}
we deduce from (4.14) that
\begin{equation}\tag{4.17}
x_{n}(0) = U_{0}^{n}\zeta +
\left(\sum_{k=1}^{n}U_{0}^{n-k}\right)\int_{0}^{\eta}U(\eta,
s)f(s)ds +
\left(\sum_{k=1}^{n}(k-1)U_{0}^{n-k}\right)\int_{0}^{\eta}U(\eta,
s)\beta(s)ds.
\end{equation}
Since
\begin{equation}\tag{4.18}
\sum_{k=1}^{n}U_{0}^{n-k} = (I-U_{0})^{-1}(I-U_{0}^{n}),
\end{equation}
and
\begin{equation}\tag{4.19}
\sum_{k=1}^{n}(k-1)U_{0}^{n-k} = - (I- U_{0})^{-2} + (I-
U_{0})^{-2}U_{0}^{n} + n(I - U_{0})^{-1},
\end{equation}
it follows from (4.15) that
\begin{multline}\tag{4.20}
x_{n}(0) = (I-U_{0})^{-1}\int_{0}^{\eta}U(\eta, s)f(s)ds - (I- U_{0})^{-2}\int_{0}^{\eta}U(\eta, s)\beta(s)ds \\
+ U_{0}^{n}\left\{\zeta - (I-U_{0})^{-1}\int_{0}^{\eta}U(\eta,
s)f(s)ds + (I- U_{0})^{-2}\int_{0}^{\eta}U(\eta, s)
\beta(s)ds\right\}\\
+n(I-U_{0})^{-1}\int_{0}^{\eta}U(\eta, s)\beta(s)ds .
\end{multline}
Combining (4.13) and (4.20) we obtain (4.9). The proof of Lemma is
complete.
\end{proof}

\noindent {\bf \underline{Theorem 4.2}.} Let the assumptions 1, 2, 3
hold. Then, there exists a unique solution $x_{\infty}(t)$ of (4.1)
such that
\begin{equation}\tag{4.21}
x_{\infty}(t+\eta) = x_{\infty}(t) + \gamma(t), \,\, \forall t \geq
0,
\end{equation}
where $\gamma(t)$ is an $\eta$-periodic function, defined by (4.12).

\begin{proof}
For $n \in \mathbb{Z}_+$ and $t \geq 0$, put
\begin{equation}\tag{4.22}
u_n(t) = x_{n}(t) - n\gamma(t).
\end{equation}
By Remark 3, it follows (4.9)-(4.12) and (4.22) that
\begin{equation}\tag{4.23}
\lim_{n\rightarrow \infty}u_{n}(t) = x_{\infty}(t).
\end{equation}
It is clear that $x_{\infty}$ is a mild solution of equation (4.1),
satisfies the initial value
\begin{equation}\tag{4.24}
x_{\infty}(0) = (I- U_0)^{-1}\int_{0}^{\eta}U(\eta, s)f(s)ds - (I-
U_0)^{-2}\int_{0}^{\eta}U(\eta, s)\beta(s)ds.
\end{equation}
From (4.23), we have
\begin{alignat}{2}\tag{4.25}
x_{\infty}(t + \eta) & = \lim_{n \rightarrow \infty}u_{n}(t + \eta)
= \lim_{n\rightarrow \infty}\left\{x_{n}(t + \eta) -
n\gamma(t + \eta)\right\} \notag \\
& = \lim_{n\rightarrow \infty}\left\{x_{n+1}(t) - (n + 1)\gamma(t) +
n\left[\gamma(t) - \gamma(t+\eta)\right] + \gamma(t) \right\}.
\notag
\end{alignat}
On the other hand, by the periodicity of $\beta(t)$, we get
\begin{alignat}{2}\tag{4.26}
\gamma(t + \eta) &= U(t + \eta, 0)(I -
U_{0})^{-1}\int_{0}^{\eta}U(\eta, s)\beta(s)ds + \int_{0}^{t+ \eta}
U(t + \eta, s)\beta(s)ds \notag \\
& = U(t , 0)(I - U_{0})^{-1}\int_{0}^{\eta}U(\eta, s)\beta(s)ds +
\int_{\eta}^{t+ \eta}
U(t + \eta, s)\beta(s)ds \notag \\
& = U(t , 0)(I - U_{0})^{-1}\int_{0}^{\eta}U(\eta, s)\beta(s)ds +
\int_{0}^{t} U(t, s)\beta(s)ds \equiv \gamma(t) \notag
\end{alignat}
Combining (4.25), (4.26) we obtain
\begin{equation}\tag{4.27}
x_{\infty}(t + \eta) = \lim_{n \rightarrow \infty}u_{n + 1}(t) +
\gamma(t) = x_{\infty}(t) + \gamma(t).
\end{equation}
Now, let $\widetilde{x}(t)$ be the mild solution of the equation
(4.1) corresponding to the initial value $\widetilde{x}(0)$ and
\begin{equation}\tag{4.28}
\widetilde{x}(t + \eta) = \widetilde{x}(t) + \widetilde{\gamma}(t),
\end{equation}
where $\widetilde{\gamma}(t)$ is an $\eta$-periodic function. Then
$\widehat{x}(t) = x_{\infty}(t) - \widetilde{x}(t)$ satisfy
\begin{equation}\tag{4.29}
\left\{
\begin{array}{l}
 \widehat{x}^{\prime}(t) = A(t)\widehat{x}(t), \,\, t \geq 0, \\
 \widehat{x}(0) = x_{\infty}(0) - \widetilde{x}(0), \\
\end{array}
\right.
\end{equation}
and
\begin{equation}\tag{4.30}
\widehat{x}(t+\eta) = \widehat{x}(t) + \widehat{\gamma}(t), \quad
\widehat{\gamma}(t + \eta) = \widehat{\gamma}(t), \quad \forall t
\geq 0.
\end{equation}
It follows from (4.29) that
\begin{equation}\tag{4.31}
\widehat{x}(t) = U(t, 0)\left(x_{\infty}(0) -
\widetilde{x}(0)\right).
\end{equation}
From (4.30), (4.31) we deduce that $x_{\infty}(0) =
\widetilde{x}(0)$. By the assumption 1, Theorem 4.2 is proved.
\end{proof}

\section{\textbf{Numerical results }}

$\qquad $In this section we consider the initial and boundary value problem (3.1)-(3.3) of section 3:

\begin{equation}
u_{t}-u_{xx}=f(x,t)\hspace{0.2cm}\text{in}\hspace{0.2cm} (0,1)\times
(0,\infty ),  \tag{5.1}  \label{s1}
\end{equation}

 with Dirichlet boundary conditions
\begin{equation}
u(0,t)=u(1,t)=0 \tag{5.2}  \label{s2}
\end{equation}
 and initial condition
\begin{equation}
u(x,0)=\widetilde{u}_{0}(x)). \tag{5.3}  \label{s3}
\end{equation}
The graphs of the approximated and exact solutions through two examples are very close each other.\ \par
 For the first example the functions $\widetilde {u}_{0},$  and $f$ are defined by
\begin{equation}
\widetilde u_{0}(x)=\frac{4\pi^3}{(\pi^4+4\pi^2)^2}\sin(\pi x)
 \tag{5.4} \label{s4}
\end{equation}

\begin{equation}
f(x,t)= \sin(\pi x)t\sin(2\pi t), \tag{5.5} \label{s5}
\end{equation}
the function $f(x,t)$ satisfying the assumption
$f(x,t+1)=f(x,t)+\beta (x,t)$, $\beta (x,t)$ being 1-periodic in t
for each $x\in [0,1]$. The exact solution of the problem (\ref{s1})
-- (\ref{s3}) with $ \widetilde{u}_{0}$ and $f$ defined in
(\ref{s4}) -- (\ref{s5}) respectively, is the function $u_{ex}$
given by
\begin{equation}
u_{ex}(x,t)=\sin(\pi x)\left[\cos (2\pi t)(a_1t+b_1)+ \sin (2\pi
t)(a_2t-b_2)\right], \tag{5.6} \label{s6}
\end{equation}with $\qquad$
$a_1=\ds\frac{-2\pi}{\pi^4+4\pi^2}$,\;$b_1=\ds\frac{4\pi^3}{(\pi^4+4\pi^2)^2}$,
$a_2=\ds\frac{\pi^2}{pi^4+4\pi^2}$,\;
$b_2=\ds\frac{4\pi^3}{(pi^4+4\pi^2)^2}$ . \
\par

$\qquad $ To solve problem (\ref{s1})-(\ref{s3}) numerically, we
consider the differential system for the unknowns $v_{j}(t)\equiv
u(x_{j},t),$ with $x_{j}=j\Delta x,$ $\Delta x=\frac{1}{p} ,$
$j=0,1,...,p:$ \begin{equation} \left\{ \begin{array}{l}
\ds\frac{dv_1}{dt}(t)=-\frac{2}{\Delta x ^2}v_{1}(t)+\frac{1}{\Delta x ^2}v_{2}(t)+ f_{1}(t) \\
\ds\frac{dv_{j}}{dt}(t)=\frac{1}{\Delta x ^2}v_{j-1}(t) -
\frac{2}{\Delta x ^2} v_j(t)
+\frac{1}{\Delta x ^2}v_{j+1}(t)+f_{j}(t),$\ $j=\overline{2,p-2}\\
\ds\frac{dv_{p-1}}{dt}(t)= \frac{1}{\Delta x
^2}v_{p-2}(t)-\frac{2}{\Delta x ^2}v_{p-1}(t)
+f_{p-1}(t) \\
v_{j}(0)=\widetilde{u}_{0}(x_{j}), f_j(t)=f(x_{j},t),
j=\overline{1,p-1}.
\end{array}%
\right.   \tag{5.7}  \label{s7}
\end{equation} The system (\ref{s7}) is equivalent to:
\begin{equation}
\begin{array}{c}
\frac{d}{dt}X(t)=AX(t)+F(t)\text{,}%
\end{array}
\tag{5.8}  \label{s8}
\end{equation}

\begin{equation}
\left\{
\begin{array}{l}
X(t)=\left( v_{1}(t),...,v_{p-1}(t)\right) ^{T} \\
F(t)=\left( f_{1},...,f_{p-1}\right) ^{T} \\
\end{array}
\right.  \tag{5.9}  \label{s9}
\end{equation} the tridiagonal matrix $A$ being defined by
\begin{equation}
A=\left[
\begin{array}{|lllll|}
\hline
-2\alpha & \alpha  &  &  & 0 \\
\alpha  & -2\alpha  & \alpha  &  &  \\
\multicolumn{1}{|l}{} & \multicolumn{1}{c}{\ddots } &
\multicolumn{1}{c}{
\ddots } & \multicolumn{1}{c}{\ddots } &  \\
\multicolumn{1}{|l}{} & \multicolumn{1}{l}{} &
\multicolumn{1}{r}{\alpha }
& \multicolumn{1}{r}{-2\alpha } & \alpha  \\
\multicolumn{1}{|l}{0} & \multicolumn{1}{l}{} & &
\multicolumn{1}{r}{\alpha } &{-2\alpha }
\\ \hline
\end{array}
\right]  \tag{5.10}  \label{s10}
\end{equation}
where $\alpha=\frac{1}{\Delta x^2}$ \ \par $\qquad $ To solve the
linear differential system (\ref{s9}), we use a spectral method with
a time step $\Delta t=0.05 $ and a spacial step $\Delta x=0.1 $

\begin{center}
\includegraphics[width=10cm,height=10cm,
 keepaspectratio=true]{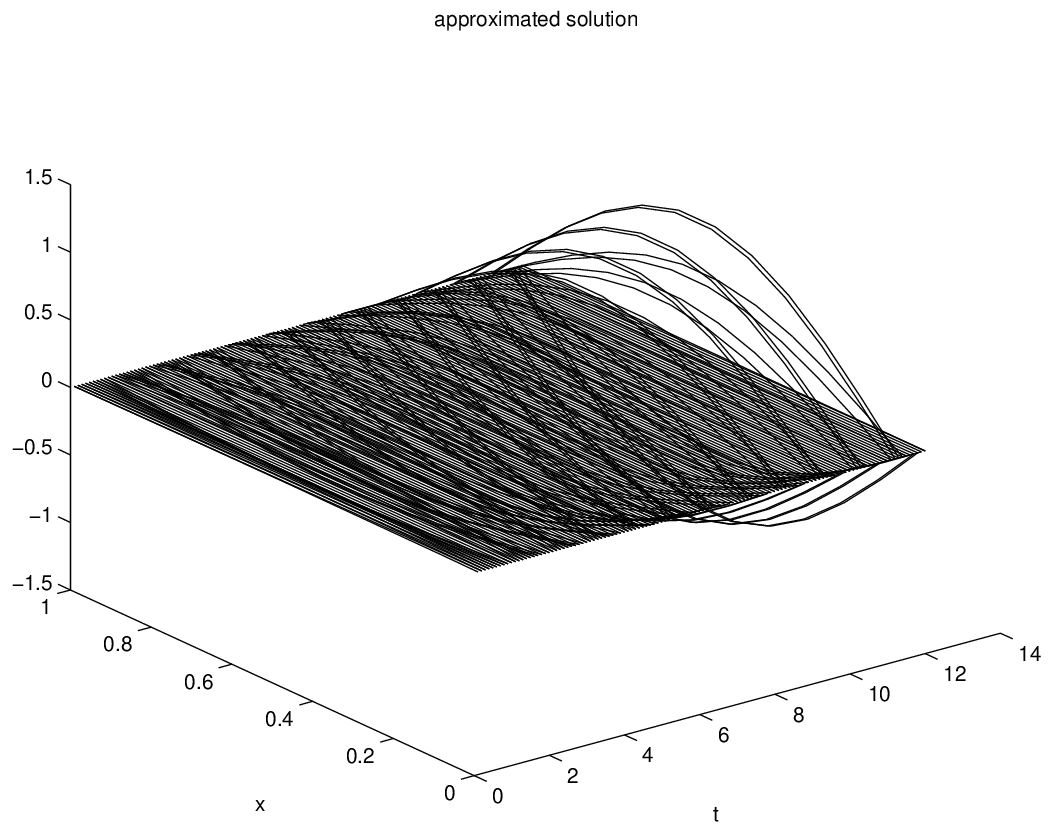}

Figure 2: Numerical solution
\end{center}

In fig.2 we have drawn the approximated solution of the problem
(\ref{s1})-(\ref{s3}) while fig.3 represents his corresponding exact
solution (\ref{s6}).

\begin{center}
\includegraphics[width=10cm,height=10cm,
keepaspectratio=true]{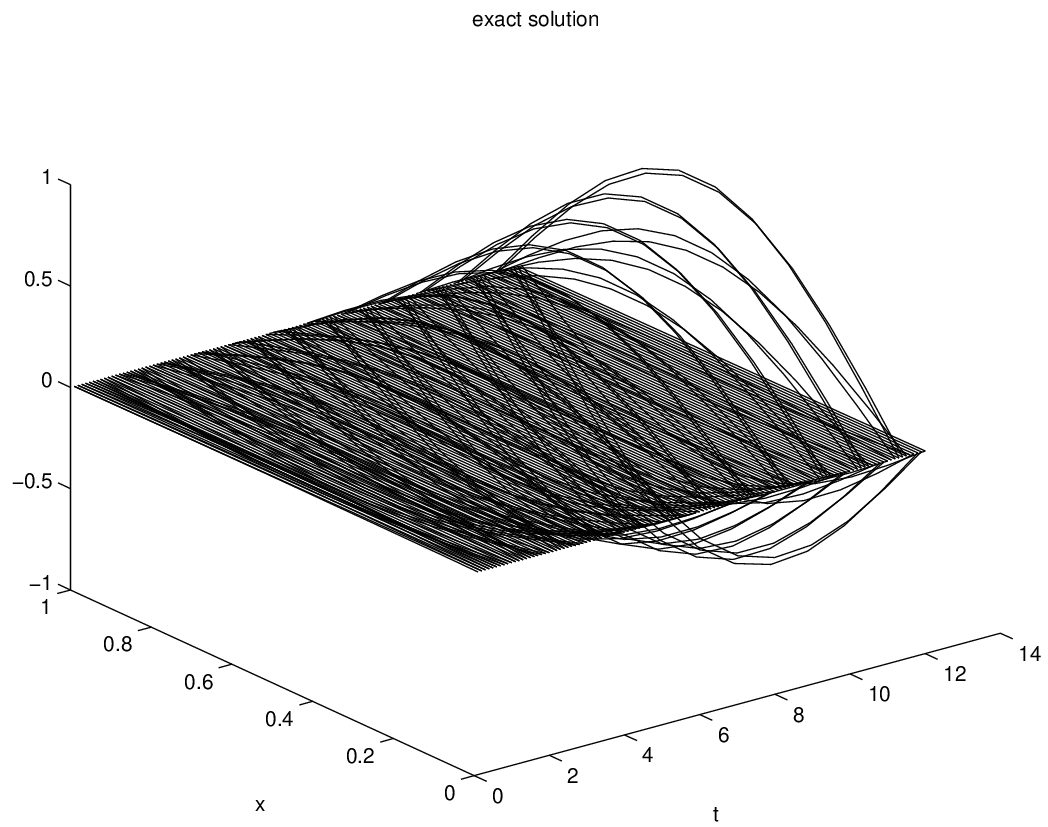}

Figure 3: Exact solution
\end{center}

The fig.4 corresponds to the median approximated component
$(x_j,t)\mapsto u(x_j,t)$. So we can see the drift property of this
component generated by the the
 drift property of the second hand side $f(x,t)$   \\

\begin{center}
\includegraphics[width=7cm,height=7cm,
keepaspectratio=true]{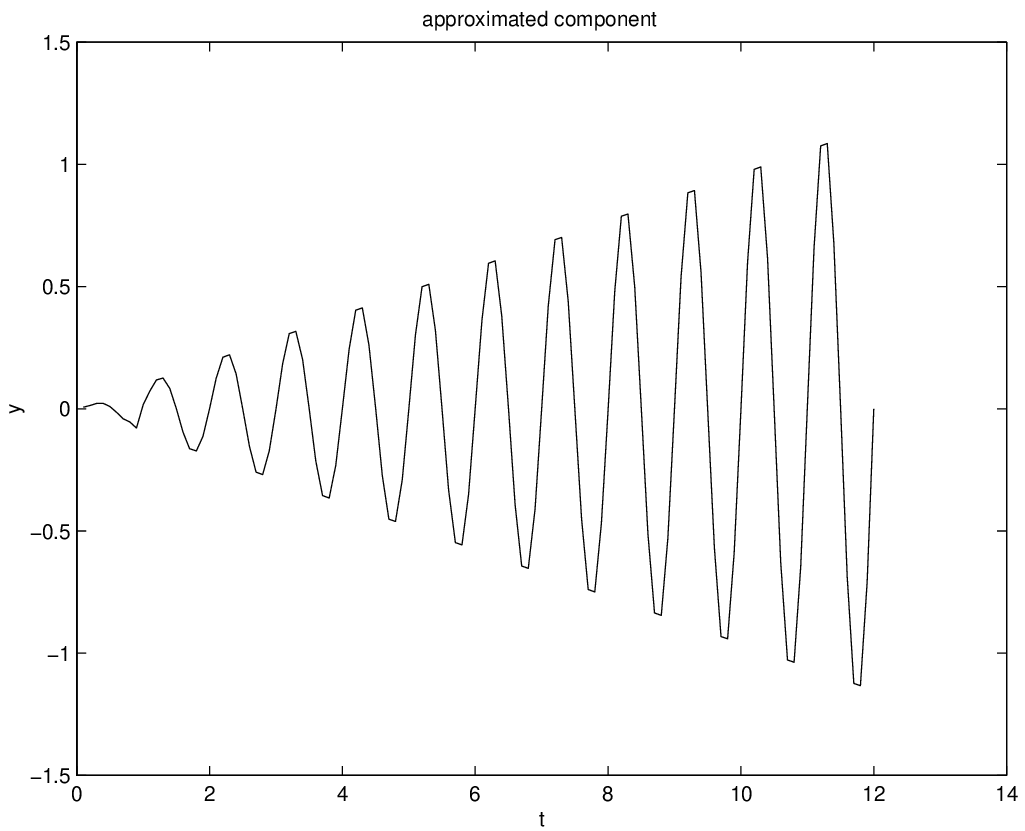}

Figure 4: curve of  $(x_j,t)\mapsto u(x_j,t)$
\end{center}
For one component $x_j$ fixed we find again numerically the curve of the function  $t\mapsto y_n(t)$ given in Lemma 2.3 (fig 1).\ \par
We have also considered another numerical example whose given data
are
\begin{equation}
f(x,t)= t\sin(\pi x), \quad \widetilde{u}_{0}(x)=-\frac{\sin (\pi
x)}{\pi^4} \tag{5.11} \label{s11}
\end{equation}
The exact solution of (\ref{s11}) is $u_{ex}(x,t)=\ds\sin (\pi
x)\left(\frac{t}{\pi^2}-\frac{1}{\pi^4}\right)$. So with the same
method as before, the corresponding surfaces and curve are drawn in
fig.5, fig.6 and fig.7 ( respectively approximated solution, exact
solution and median component $(x_j,t)\mapsto u(x_j,t)$ ).

\begin{center}
\includegraphics[width=10cm,height=10cm,
 keepaspectratio=true]{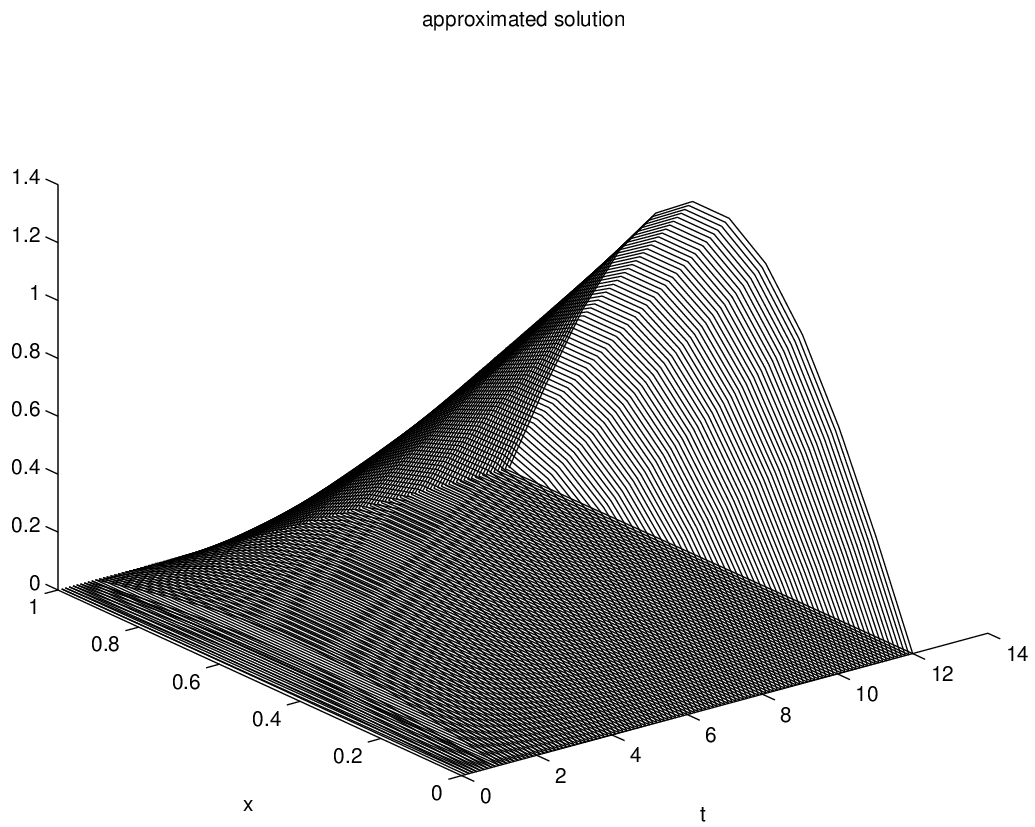}

Figure 5: Numerical solution, case 2
\end{center}

\begin{center}
\includegraphics[width=10cm,height=10cm,
keepaspectratio=true]{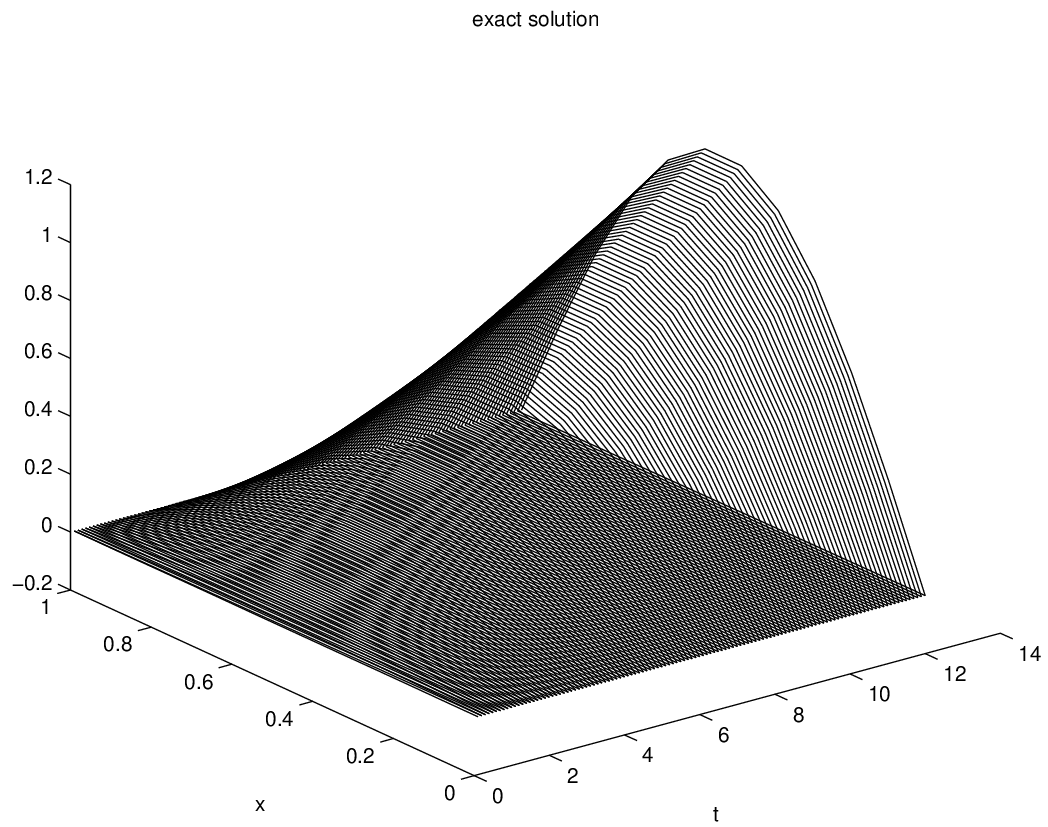}

Figure 6: Exact solution, case 2
\end{center}

\begin{center}
\includegraphics[width=7cm,height=7cm,
keepaspectratio=true]{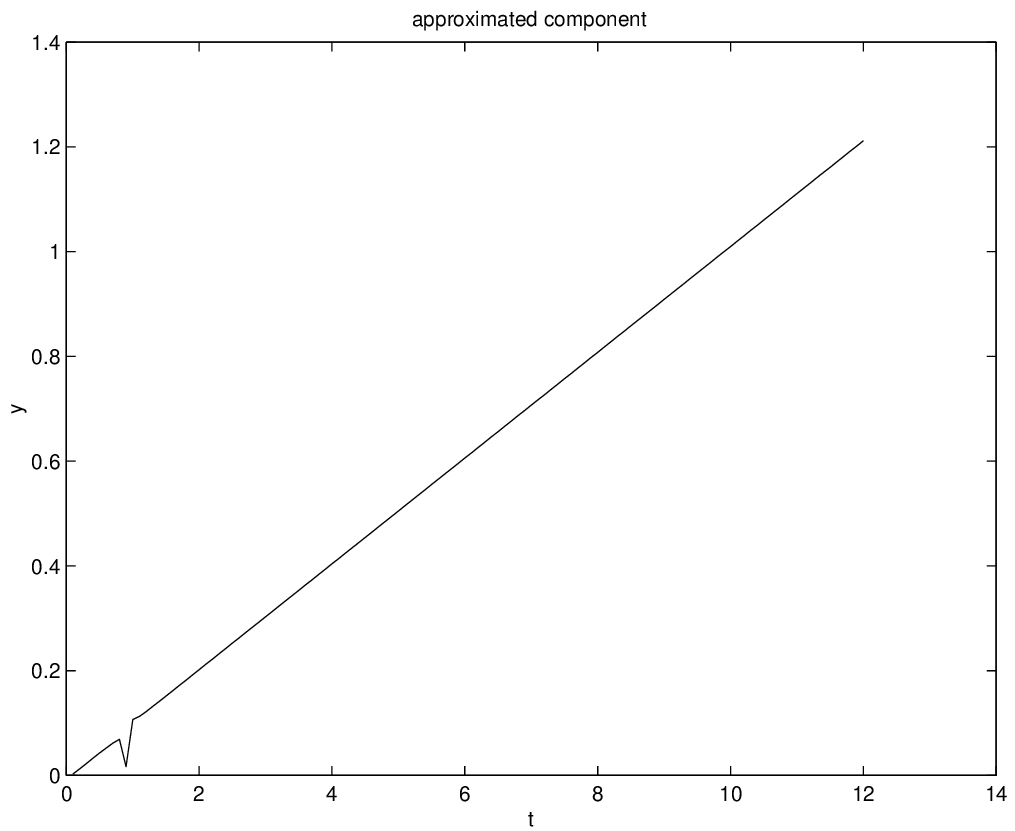}

Figure 7: curve of  $(x_j,t)\mapsto u(x_j,t)$
\end{center}
So through the results of the sections 2 and 3 we can see that a linear drift on the source term (the rhs) gives a linear drift
on the solution. This property can be generalized via an analytic semi-group liking the ODE and PDE. On the other hand there exits a unit concerning the sections 2, 3 and 4 which dwells in the asymptotic behavior (lemma 2.3, Theorem 3.3 and theorem 4.1). \ \par
\vspace{0.3cm}
\textbf{Acknowledgements.} The authors wish to express their sincere thanks
to the referees for the suggestions and valuable comments. The first author (Stephane Cordier) wants to thank M. Martin of INRA for very
helpful discussions. The third author
(Nguyen Thanh Long) is also extremely grateful for the support given by
Vietnam's National Foundation for Science and Technology Development
(NAFOSTED) under Project \textbf{101.01-2010.15.}

\end{document}